\newtheorem{theorem}{Theorem} 
\newtheorem{corollary}[theorem]{Corollary}
\newtheorem{exam}{Example}
\newtheorem*{rem}{Remarks}
\newcommand\commentout[1]{}
\newcommand\Def[1]{{\bf #1}}
\newcommand\Asc{\operatorname{Asc}} 
\newcommand\Des{\operatorname{Des}} 
\newcommand\des{\operatorname{des}} 
\newcommand\maj{\operatorname{maj}}
\newcommand\JH{\operatorname{JH}}
\newcommand\ZZ{\mathbb{Z}}
\newtheorem*{rep@theorem}{\rep@title}\newcommand{\newreptheorem}[2]{%
\newenvironment{rep#1}[1]{%
\def\rep@title{\bf #2 \ref{##1}}%
\begin{rep@theorem}}%
{\end{rep@theorem}}}
\newcounter{teach}
\begin{document}

\title{MacMahon's Double Vision: Partition Diamonds Revisited}


\author{Matthias Beck}
\author{Kobe Wijesekera}
\address{Department of Mathematics\\ San Francisco State University}
\email{[mattbeck,kwijesekera]@sfsu.edu}


\begin{abstract}
Plane partition diamonds were introduced by Andrews, Paule, and Riese (2001) as part of their study of
MacMahon's $\Omega$-operator in search for integer partition identities. More recently, Dockery, Jameson,
Sellers, and Wilson (2024) extended this concept to $d$-fold partition diamonds and found their generating
function in a recursive form. We approach $d$-fold partition diamonds via Stanley's (1972) theory of $P$-partitions and give a closed formula for a bivariate generalization of the Dockery--Jameson--Sellers--Wilson generating function; its main ingredient is the Euler--Mahonian polynomial encoding descent statistics of permutations.
\end{abstract}

\keywords{Integer partition, plane partition diamond, Euler--Mahonian polynomial, $P$-partition, descent
statistics.}

\subjclass[2020]{Primary 05A17; Secondary 05A15, 11P81.}

\date{7 Jun 2025}

\maketitle


\section{Introduction}

\begin{figure}[h]
\centering
\begin{tikzpicture}
\begin{scope}[every node/.style={circle,draw,scale=0.75,inner sep=2pt}]
    \node[label={$a_1$}] (a0) at (0,0) {};
    \node[label={$a_2$}] (b1) at (1,1) {};
    \node[label=below:{$a_3$}] (b2) at (1,-1) {};
    \node[label={$a_4$}] (a1) at (2,0) {};
    \node[label={$a_5$}] (b3) at (3,1) {};
    \node[label=below:{$a_6$}] (b4) at (3,-1) {};
    \node[label={$a_7$}] (a2) at (4,0) {};
    \node[label={$a_8$}] (b5) at (5,1) {};
    \node[label=below:{$a_9$}] (b6) at (5,-1) {};
    \node[label={$a_{10}$}] (a3) at (6,0) {};
\end{scope}

\begin{scope}[>={Stealth[black]}, decoration={
    markings,
    mark=at position 0.6 with {\arrow{Latex}}}
    ] 
    \draw[postaction={decorate}] (a0) -- (b1);
    \draw[postaction={decorate}] (a0) -- (b2);
    \draw[postaction={decorate}] (b1) -- (a1);
    \draw[postaction={decorate}] (b2) -- (a1);
    \draw[postaction={decorate}] (a1) -- (b3);
    \draw[postaction={decorate}] (a1) -- (b4);
    \draw[postaction={decorate}] (b3) -- (a2);
    \draw[postaction={decorate}] (b4) -- (a2);
    \draw[postaction={decorate}] (a2) -- (b5);
    \draw[postaction={decorate}] (a2) -- (b6);
    \draw[postaction={decorate}] (b5) -- (a3);
    \draw[postaction={decorate}] (b6) -- (a3) node[right] {$\cdots$};
\end{scope}
\end{tikzpicture}
\caption{A plane partition diamond.}
\label{fig:2fold}
\end{figure}
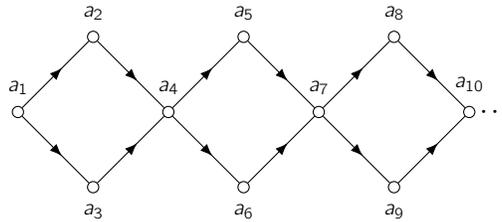

A \Def{plane partition diamond} is an integer partition $a_1 + a_2 + \dots + a_k$ whose parts satisfy the
inequalities given by Figure~\ref{fig:2fold}, where each directed edge represents $\ge$.
Plane partition diamonds were introduced by Andrews, Paule, and Riese~\cite{andrewspauleriesePA8}, who found their generating function as
\[
  \prod_{ n \ge 1 } \frac{ 1 + q^{ 3n-1 } }{ 1 - q^n } \, .
\]
They proved this result as a part of an impressive series of papers on partition identities via MacMahon's
$\Omega$-operator; indeed MacMahon himself used it for the case of a single $\diamond$~\cite[Volume~2,
Section~IX, Chapter~II]{macmahon}. 

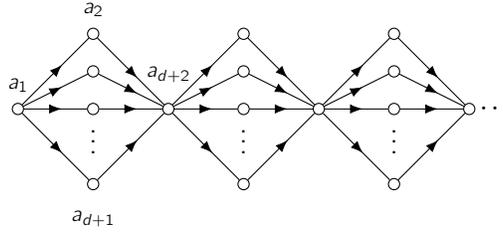
\begin{figure}[h]
\centering
\begin{tikzpicture}
\begin{scope}[every node/.style={circle,draw,scale=0.75,inner sep=2pt}]
\foreach \i/\k in {0/0,2/1,4/2,6/3}
    {\node[] (a\k) at (\i,0) {};}
\foreach \i in {1,2,3}
    {\node[] (b1\i) at (1,-0.5*\i+1.5) {};
     \node[] (b2\i) at (3,-0.5*\i+1.5) {};
     \node[] (b3\i) at (5,-0.5*\i+1.5) {};
     \node[] (b\i d) at (2*\i-1,-1) {};}
\foreach \i in {1,2,3}
    {\node also [] (b\i 1);
     \node also [] (b\i d);}

\node also [label = $a_1$] (a0);
\node also [label = $a_2$] (b11);
\node also [label=below: $a_{d+1}$] (b1d);
\node also [label = $a_{d+2}$] (a1);
\end{scope}

\begin{scope}[>={Stealth[black]}, decoration={
    markings,
    mark=at position 0.6 with {\arrow{Latex}}}]
\foreach \i/\k in {0/1,1/2,2/3}
    {\foreach \j in {1,2,3,d}
    {\draw[postaction=decorate] (a\i) -- (b\k\j);
     \draw[postaction=decorate] (b\k\j) -- (a\k);
    }}
\foreach \i in {1,3,5}
    {\node[] at (\i,-0.33) {$\vdots$};}
\node[right] at (6,0) {$\cdots$};
\end{scope}
\end{tikzpicture}
\caption{A $d$-fold partition diamond.}
\label{fig:dfold}
\end{figure}

Dockery, Jameson, Sellers, and Wilson~\cite{dockeryjamesonsellerswilson} recently generalized the above concept to a \Def{$d$-fold partition diamond}, whose parts now follow the inequalities stipulated by Figure~\ref{fig:dfold}.
They proved that their generating function equals
\[
  \prod_{ n \ge 1 } \frac{ F_d ( q^{ (n-1)(d+1) + 1 } , q ) }{ 1 - q^n }
\]
where $F_d(q_0, w) \in \ZZ[q_0, w]$ is recursively defined via $F_1(q_0, w) = 1$ and
\begin{equation}\label{eq:recursion}
  F_d(q_0, w) \, = \, \frac{ (1-q_0 w^d) \, F_{ d-1 }(q_0, w) - w(1-q_0) \, F_{ d-1 }(q_0 w, w) }{ 1-w } \, .
\end{equation}
Once more the proof in~\cite{dockeryjamesonsellerswilson} uses MacMahon's $\Omega$-operator.

Our goal is to view the above results via Stanley's theory of $P$-partitions~\cite{stanleythesis,stanleyec1}.
Our first result gives a closed formula for the generating function for $d$-fold partition diamonds.
In a charming twist of fate, its main ingredient turns out to be the \Def{Euler--Mahonian polynomial}
\[
  E_d(x, y) \, := \, \sum_{ \tau \in S_d } x^{ \des(\tau) } y^{ \maj(\tau) } 
\]
which first appeared in a completely separate part of MacMahon's vast body of work~\cite[Volume~2, Chapter~IV,
Section~462]{macmahon};
here $\Des(\tau) := \{ j : \, \tau(j) > \tau(j+1) \}$ records the descent positions of a given permutation $\tau
\in S_d$, with the statistics $\des(\tau) := |\Des(\tau)|$ and $\maj(\tau) := \sum_{ j \in \Des(\tau) } j$.

\begin{theorem}\label{thm:firstthm}
The Dockery--Jameson--Sellers--Wilson polynomial $F_d(x,y)$ equals the Euler--Mahonian polynomial~$E_d(x,y)$.
\end{theorem}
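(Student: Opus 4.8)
The plan is to show that the Euler--Mahonian polynomial $E_d(x,y)$ satisfies the very same data that define $F_d(x,y)$, namely the initial value $E_1 = 1$ and the recursion~\eqref{eq:recursion} (under the identification $q_0 = x$, $w = y$); since these determine the sequence $(F_d)_{d\ge1}$ uniquely, induction on $d$ then forces $E_d = F_d$ for all $d$. The base case is immediate: $S_1$ consists only of the identity, which has $\des = \maj = 0$, so $E_1(x,y) = 1$. Everything therefore reduces to establishing
\[
  (1-y)\,E_d(x,y) \;=\; (1 - xy^d)\,E_{d-1}(x,y) \;-\; y(1-x)\,E_{d-1}(xy,y) \, .
\]

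The key tool will be a generating-function avatar of $E_d$. Writing $[m]_y := 1 + y + \dots + y^{m-1}$, I would introduce
\[
  G_d(x,y) \;:=\; \sum_{k \ge 0} [\,k+1\,]_y^{\,d}\, x^k
\]
and prove the Carlitz-type identity
\[
  G_d(x,y) \;=\; \frac{E_d(x,y)}{\prod_{i=0}^{d}\bigl(1 - xy^i\bigr)} \, .
\]
This is where Stanley's theory enters, and it is the step I expect to carry the real weight. Expanding $[\,k+1\,]_y^{\,d} = \sum_{\ba \in \{0,\dots,k\}^d} y^{|\ba|}$ (with $|\ba| := a_1 + \dots + a_d$) and summing the geometric series in $k$ rewrites the left-hand side as $\frac{1}{1-x}\sum_{\ba \in \ZZ_{\ge0}^d} x^{\max \ba}\, y^{|\ba|}$, which is exactly the bivariate Ehrhart series of the unit cube $[0,1]^d$. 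Sorting each lattice point $\ba$ by the unique permutation $\tau$ that arranges its coordinates into weakly decreasing order decomposes this sum along the standard unimodular triangulation of the cube, and Stanley's fundamental lemma for $P$-partitions produces the statistics $\des(\tau)$ and $\maj(\tau)$ in the numerator---which is precisely how the Euler--Mahonian polynomial appears.

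Granting the formula for $G_d$, the recursion is a short computation. Factoring $[\,k+1\,]_y^{\,d} = [\,k+1\,]_y\cdot[\,k+1\,]_y^{\,d-1}$ and using $[\,k+1\,]_y = (1 - y^{k+1})/(1-y)$ together with $y^{k+1}x^k = y\,(xy)^k$ gives
\[
  G_d(x,y) \;=\; \frac{G_{d-1}(x,y) - y\,G_{d-1}(xy,y)}{1-y} \, .
\]
Substituting the product formula and cancelling by means of the telescoping identities $\prod_{i=0}^{d}(1-xy^i)\big/\prod_{i=0}^{d-1}(1-xy^i) = 1-xy^d$ and $\prod_{i=0}^{d}(1-xy^i)\big/\prod_{i=0}^{d-1}(1-xy^{i+1}) = 1-x$ then yields exactly the recursion displayed above, completing the induction. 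The genuine obstacle is thus the $P$-partition identity for $G_d$; the surrounding steps are routine power-series manipulations.
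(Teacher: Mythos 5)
Your proposal is correct, but it takes a genuinely different route from the paper. The paper never verifies the recursion~\eqref{eq:recursion} directly: it proves the bivariate closed form of Theorem~\ref{thm:main} by applying Stanley's formula~\eqref{eq:stanleygenfct} to the diamond poset $\{1\}\oplus Q_d\oplus\dots\oplus Q_d$ itself, then specializes $a=b=q$, lets $M\to\infty$, and identifies the resulting product with the Dockery--Jameson--Sellers--Wilson product $\prod_{n\ge1}F_d(q^{(n-1)(d+1)+1},q)/(1-q^n)$; thus Theorem~\ref{thm:firstthm} is obtained by comparison, using the DJSW theorem as an external input. You instead use only the \emph{definition} of $F_d$ (base case plus recursion~\eqref{eq:recursion}) and show that $E_d$ satisfies the same data, with Carlitz's identity
\[
  \sum_{k\ge0}[\,k+1\,]_y^{\,d}\, x^k \, = \, \frac{E_d(x,y)}{\prod_{i=0}^{d}\left(1-xy^i\right)}
\]
carrying the weight; your sketch of that identity (sorting the points of $\ZZ_{\ge0}^d$ by the permutation that orders their coordinates and applying Stanley's lemma --- i.e., $P$-partitions on an antichain rather than on the diamond poset) is the standard argument and is sound, and I verified the telescoping factors $(1-xy^d)$ and $(1-x)$ that turn the $G_d$ recursion into~\eqref{eq:recursion}. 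As for what each approach buys: yours is more self-contained for Theorem~\ref{thm:firstthm}, needing neither Theorem~\ref{thm:main} nor the DJSW generating-function theorem, and it sidesteps a subtlety the paper glosses over (deducing equality of two bivariate polynomials from equality of univariate infinite products); it also establishes directly the recursion for Euler--Mahonian polynomials that the paper only infers a posteriori and ``suspects is known.'' The paper's route, in exchange, yields the much stronger Theorem~\ref{thm:main} --- the finite-length bivariate refinement and its multifold extension --- of which Theorem~\ref{thm:firstthm} is a one-line specialization, whereas your argument gives no information about $\sigma_{d,M}(a,b)$.
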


This theorem consequently implies that~\eqref{eq:recursion} defines the Euler--Mahonian polynomials recursively.
We suspect that this recursion is known but could not find it in the literature.

\begin{figure}[h]
\centering
\begin{tikzpicture}
\begin{scope}[every node/.style={circle,draw,scale=0.75,inner sep=2pt}]
\foreach \i/\k in {0/0,2/1,4/2,5/n-1,7/n}
    {\node[] (a\k) at (\i,0) {};}
\foreach \i in {1,2,3}
    {\node[] (b1\i) at (1,-0.5*\i+1.5) {};
     \node[] (b2\i) at (3,-0.5*\i+1.5) {};
     \node[] (bn\i) at (6,-0.5*\i+1.5) {};}
\foreach \i/\k in {1/1,2/2,3.5/n}
    { \node[] (b\k d) at (2*\i-1,-1) {};}
\foreach \i in {1,2,n}
    {\node also [] (b\i 1);
     \node also [] (b\i d);}
\foreach \k in {0,1,2,n}{
\node also [] (a\k);
}
\foreach \k in {n-1}{\node also [] (a\k);}

\node also [label =left: $a_1$] (a0);
\node also [label = $a_2$] (b11);
\node also [label=below: $a_{d+1}$] (b1d);
\node also [label = $a_{d+2}$] (a1);
\node also [label=right: $a_c$] (an);

\end{scope}
\begin{scope}[>={Stealth[black]}, decoration={
    markings,
    mark=at position 0.6 with {\arrow{Latex}}}]
\foreach \i/\k in {0/1,1/2,n-1/n}
    {\foreach \j in {1,2,3,d}
    {\draw[postaction=decorate] (a\i) -- (b\k\j);
     \draw[postaction=decorate] (b\k\j) -- (a\k);
    }}
\foreach \i in {1,3,6}
    {\node[] at (\i,-0.33) {$\vdots$};}
\node[] at (4.5,0) {$\cdots$};   
\end{scope}
\end{tikzpicture}
\caption{The $d$-fold partition diamond poset of length $M$, with $c = M(d+1)+1$.}
\label{fig:dfoldfin}
\end{figure}
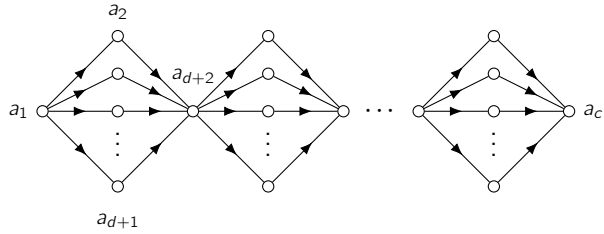

Theorem~\ref{thm:firstthm} is actually a corollary of our main result, which gives a 2-variable refinement as follows.
Let $M$ be the number of $\diamond$s in a finite version of the $d$-fold partition diamond depicted in
Figure~\ref{fig:dfoldfin}; we call $M$ the \Def{length} of the diamond.
We define $\sigma_{d,M}(a,b)$ be the generating function of $d$-fold partition diamonds of length $M$, where $a$
encodes the parts in the ``folds'' of the diamond, and $b$ encodes the parts in the links connecting the
$\diamond$s. That is,
\begin{equation}\label{eq:sigmadMdef}
  \sigma_{d,M}(a,b) \, := \, \sum a^{ a_2 + \dots + a_{ d+1 } + a_{ d+3 } + \dots + a_{ 2d + 2 } + a_{ 2d + 4 } + \dots + a_{ M(d+1) } } \, b^{ a_1 + a_{ d+2 } + \dots + a_{ M(d+1) + 1 } }
\end{equation}
where the sum is over all $d$-fold partition diamonds $a_1 + a_2 + \dots + a_{ M(d+1) + 1 }$.

\begin{theorem}\label{thm:main}
\[
  \sigma_{d,M}(a,b) \, = \, \frac{ \prod_{ n=1 }^M E_d ( a^{ (n-1)d } b^n , a ) }{ \left( 1 - a^{ Md } b^{ M+1 } \right) \prod_{ n=1 }^M \prod_{ j=0 }^d \left( 1 - a^{ nd-j } b^n \right) } \, .
\]
\end{theorem}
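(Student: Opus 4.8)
The plan is to realize $\sigma_{d,M}(a,b)$ as a $(P,\omega)$-partition generating function and then apply Stanley's fundamental lemma~\cite{stanleyec1}. First I would encode the $d$-fold partition diamond of length $M$ as a poset $P = P_{d,M}$ on the ground set $\{1, \dots, N\}$, $N := M(d+1)+1$, whose order relations are exactly the inequalities of Figure~\ref{fig:dfoldfin}: the link elements $\ell_0, \ell_1, \dots, \ell_M$ (at positions $n(d+1)+1$) form a chain $\ell_0 <_P \dots <_P \ell_M$, and for each $n$ the $d$ fold elements of the $n$th diamond form an antichain $F_n$ sitting strictly between $\ell_{n-1}$ and $\ell_n$. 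Since every edge of the diagram is a weak inequality, I would equip $P$ with a natural (order-preserving) labeling $\omega$, so that the $(P,\omega)$-partitions are precisely the order-reversing maps $f\colon P \to \ZZ_{\ge 0}$, i.e.\ the $d$-fold partition diamonds. Assigning the weight $a$ to each fold element and $b$ to each link element, the monomial $\prod_p z_p^{f(p)}$ recovers the summand in~\eqref{eq:sigmadMdef}, so that $\sigma_{d,M}(a,b)$ is exactly this weighted $(P,\omega)$-partition generating function.

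Next I would invoke the fundamental lemma: the set of all $(P,\omega)$-partitions is the disjoint union, over the linear extensions $w = w_1 \cdots w_N$ of $P$, of the sets of $f$ satisfying $f(w_1) \ge \dots \ge f(w_N) \ge 0$ with strict inequality forced at each descent of $w$. Parametrizing by successive differences yields, for a single $w$, the contribution
\[
  \frac{ \prod_{ j \in \Des(w) } Z_j^{(w)} }{ \prod_{ j=1 }^{ N } \bigl( 1 - Z_j^{(w)} \bigr) },
  \qquad Z_j^{(w)} := \prod_{ i=1 }^{ j } z_{ w_i },
\]
so that $\sigma_{d,M}(a,b)$ is the sum of these over all $w$. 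The structural key is that $P$ is graded into the blocks $\ell_0, F_1, \ell_1, F_2, \dots, \ell_M$, each block lying entirely below the next, so a linear extension amounts to an independent choice of a permutation $\tau_n \in S_d$ ordering each antichain $F_n$; in particular there are $(d!)^M$ of them, and a descent of $w$ can occur only strictly inside some $F_n$.

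From here the computation splits cleanly into denominator and numerator. Because all fold elements carry weight $a$ and all link elements weight $b$, the partial product $Z_j^{(w)}$ depends only on $j$ (the multiset of weights among the first $j$ elements is determined by how many complete blocks and how many fold elements they comprise), so the denominator is independent of $w$ and factors out of the sum. A direct bookkeeping of these partial products---reaching $Z = a^{(n-1)d} b^n$ at $\ell_{n-1}$, then multiplying by one factor of $a$ per fold of $F_n$, then by $b$ at $\ell_n$---reproduces exactly $\bigl( 1 - a^{Md} b^{M+1} \bigr) \prod_{n=1}^M \prod_{j=0}^d \bigl( 1 - a^{nd-j} b^n \bigr)$. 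For the numerator, the independence of the block orderings lets $\sum_w \prod_{j \in \Des(w)} Z_j^{(w)}$ factor as $\prod_{n=1}^M$ of a per-diamond sum over $\tau \in S_d$; since a descent of $\tau$ at local position $i$ sits at global partial product $Z = a^{(n-1)d + i} b^n = \bigl( a^{(n-1)d} b^n \bigr) a^i$, each per-diamond factor collapses to $\sum_{\tau \in S_d} \bigl( a^{(n-1)d} b^n \bigr)^{\des(\tau)} a^{\maj(\tau)} = E_d\bigl( a^{(n-1)d} b^n, a \bigr)$, giving the claimed numerator.

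The part that will require the most care is the combinatorial setup rather than the algebra: fixing the labeling convention so that the forced inequalities are all weak and so that the induced descents of $w$, restricted to each antichain $F_n$, match the honest descent set $\Des(\tau_n)$ of a permutation in $S_d$ with the standard $\des$ and $\maj$ statistics. Once the convention is pinned down so that reading $F_n$ in increasing-label order is descent-free, the remaining steps are the routine cone parametrization and the index bookkeeping sketched above.
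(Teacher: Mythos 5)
Your proposal is correct and follows essentially the same route as the paper: encode the length-$M$ diamond as the linear sum of a minimal link element and $M$ copies of the ``antichain plus top'' poset, apply Stanley's $P$-partition decomposition over linear extensions, observe that linear extensions factor into independent choices of $\tau_n \in S_d$ with descents confined to the antichains, and read off the denominator and the per-diamond Euler--Mahonian factors. The only difference is cosmetic: by using the order-reversing (prefix-product) form of the fundamental lemma, your partial products come out directly as $a^{(n-1)d+i}b^n$, so you bypass the ascent/descent reversal bijection the paper needs to convert its suffix-product exponents $a^{(M-k+1)d-j}$ into Euler--Mahonian form.
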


Naturally, Theorem~\ref{thm:firstthm} follows with $a = b = q$ and $M \to \infty$.
Another special evaluation ($a = 1$) gives the generating function, already discovered
in~\cite{dockeryjamesonsellerswilson}, of \Def{Schmidt type $d$-fold partition diamonds}, in which we sum only
the links $a_1 + a_{ d+2 } + a_{ 2(d+1) + 1 } + \cdots$.

\begin{corollary}[Dockery--Jameson--Sellers--Wilson]
The generating function for Schmidt type $d$-fold partition diamonds is given by
\[
  \prod_{ n \ge 1 } \frac{ E_d( q^n , 1) }{ (1-q^n)^{ d+1 } } \, .
\]
\end{corollary}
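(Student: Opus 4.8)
The plan is to derive the corollary as a direct specialization of Theorem~\ref{thm:main}, exploiting the fact that setting $a=1$ collapses the ``fold'' contributions and leaves only the links. First I would observe that the variable $a$ is the exponent-tracker for all the non-link parts $a_2+\dots+a_{d+1}, a_{d+3}+\dots$, so putting $a=1$ in $\sigma_{d,M}(a,b)$ exactly produces the generating function in which we sum only over the links $a_1+a_{d+2}+\cdots$, i.e.\ the Schmidt-type generating function of length $M$; I would make this identification explicit by comparing with the definition~\eqref{eq:sigmadMdef}.

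Next I would substitute $a=1$ into the closed formula of Theorem~\ref{thm:main} and simplify. In the Euler--Mahonian factor this gives $E_d(a^{(n-1)d}b^n,a)\big|_{a=1}=E_d(b^n,1)$, since the second slot becomes $1$ and the first slot becomes $b^n$. In the denominator, the factor $1-a^{Md}b^{M+1}$ becomes $1-b^{M+1}$, and each factor $1-a^{nd-j}b^n$ becomes $1-b^n$, independent of $j$; since $j$ ranges over $0,1,\dots,d$ (that is, $d+1$ values), the product $\prod_{j=0}^d(1-a^{nd-j}b^n)$ collapses to $(1-b^n)^{d+1}$. Thus
\[
  \sigma_{d,M}(1,b) \, = \, \frac{ \prod_{ n=1 }^M E_d( b^n , 1 ) }{ \left( 1 - b^{ M+1 } \right) \prod_{ n=1 }^M \left( 1 - b^n \right)^{ d+1 } } \, .
\]

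Finally I would take the limit $M\to\infty$ and rename $b$ to $q$. The isolated factor $1-b^{M+1}$ tends to $1$, so it disappears, and the two finite products over $n=1,\dots,M$ become infinite products over $n\ge 1$, yielding
\[
  \prod_{ n \ge 1 } \frac{ E_d( q^n , 1 ) }{ (1-q^n)^{ d+1 } } \, ,
\]
which is exactly the claimed generating function.

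I do not expect any serious obstacle here: the only points requiring care are the combinatorial justification that $a=1$ genuinely yields the Schmidt-type generating function (rather than, say, an artifact of the formula) and the verification that the limiting infinite products converge as formal power series in $q$, which follows because the minimal degree of a nonconstant contribution grows with $n$. The mildly delicate step is confirming that the $d+1$ denominator factors indexed by $j=0,\dots,d$ really do all specialize to the same $1-b^n$, so that the exponent $d+1$ is correct; this is immediate once one notes that $a=1$ erases the $a^{nd-j}$ prefactor uniformly.
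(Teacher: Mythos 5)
Your proposal is correct and takes essentially the same route as the paper, which obtains this corollary precisely by the special evaluation $a=1$ in Theorem~\ref{thm:main} followed by the limit $M \to \infty$ (renaming $b$ as $q$). Your write-up merely fills in the routine simplifications --- the collapse of $\prod_{j=0}^d(1-a^{nd-j}b^n)$ to $(1-b^n)^{d+1}$, the disappearance of the factor $1-b^{M+1}$, and the formal-power-series convergence --- that the paper leaves implicit.
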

The polynomial $E_d(x,1) = \sum_{ \tau \in S_d } x^{ \des(\tau) }$ is known as an \Def{Eulerian polynomial}.

Section~\ref{sec:proof} contains our proof of Theorem~\ref{thm:main}.
As we will see, it can be applied to more general situations, e.g., allowing folds within the diamond of
different heights. We will outline this in Section~\ref{sec:final}.


\section{Proofs}\label{sec:proof}

We now briefly review Stanley's theory of $P$-partitions~\cite{stanleythesis,stanleyec1}.
Fix a finite partially ordered set $(P, \preceq)$. We may assume that $P = [c] := \{ 1, 2,
\dots, c \}$ and that $j \preceq k$ implies $j \le k$; i.e., $(P, \preceq)$ is \Def{naturally labelled}.
A \Def{linear extension} of $(P, \preceq)$ is an order-preserving bijection $(P, \preceq) \to ([c], \le)$.
It is a short step to think about a linear extension as a permutation in $S_c$; accordingly we define the
\Def{Jordan--H\"older set} of $(P, \preceq)$ as
\[
  \JH(P, \preceq) \, := \, \left\{ \tau \in S_c : \ \preceq_\tau \text{refines} \preceq \right\} 
\]
where $\preceq_\tau$ refers to the (total) order given by the linear extension corresponding to $\tau$.

A \Def{$P$-partition} is a composition $m_1 + m_2 + \dots + m_c$ such that $m: P \to \mathbb{Z}_{ \ge 0 }$ is
order preserving:\footnote{
Stanley defines $P$-partitions in an order-\emph{reversing} fashion.
}
\[
  j \preceq k \qquad \Longrightarrow \qquad m_j \leq m_k \, .
\]
It comes with the multivariate generating function
\[
  \sigma_P(z_1, z_2, \dots, z_c) \, := \, \sum z_1^{ m_1 } z_2^{ m_2 } \cdots z_c^{ m_c }
\]
where the sum is over all $P$-partitions.
The standard $q$-series for the $P$-partitions is, naturally, the specialization $\sigma_P(q, q, \dots, q)$.

One of Stanley's fundamental results, given here in the form of \cite[Corollary~6.4.4]{crt}, is that
\begin{equation}\label{eq:stanleygenfct}
  \sigma_P(z_1, z_2, \dots, z_c)
  \, = \sum_{ \tau \in \JH(P, \preceq) } \frac{ \prod_{ j \in \Des(\tau) } z_{ \tau(j+1) } z_{ \tau(j+2) }
\cdots z_{ \tau(c) } }{ \prod_{ j=0 }^{ c-1 } \left( 1 - z_{ \tau(j+1) } z_{ \tau(j+2) } \cdots z_{ \tau(c) }
\right) } \, .
\end{equation}
We will apply \eqref{eq:stanleygenfct} to the poset depicted in Figure~\ref{fig:dfoldfin}. 
This poset has a natural additive structure, and so we first review how to compute~\eqref{eq:stanleygenfct} for
\[ P \, = \, Q_0 \oplus Q_1 \oplus \dots \oplus Q_M \, , \] where the linear sum $A \oplus B$ of two posets $A$ and $B$ is defined on the ground set $A \uplus B$, with the relations inherited among elements of $A$ and those of $B$, together with $a \preceq b$ for any $a \in A$ and $b \in B$. 
Assuming that $Q_j$ has ground set $[q_j]$, let $s_0 := 0$ and $s_j := q_0 + q_1 + \dots + q_{ j-1 }$. 
Each element $\tau \in \JH(P)$ is uniquely given via
\[
\tau(j) =
    \begin{cases}
        \tau_0(j) & \text{if } j \in Q_0,\\
        \tau_1(j-s_1) + s_1 & \text{if } j \in Q_1', \\
        \tau_2(j-s_2) + s_2 & \text{if } j \in Q_2',\\
        \qquad \vdots & \\
        \tau_M(j-s_{M}) + s_{M} & \text{if } j \in Q'_M,
    \end{cases}
\]
for some $\tau_j \in \text{JH}(Q_j)$, where $0 \le j \le M$.
Here
$
  Q'_k \, := \, \left\{s_k + 1, s_k+2,\dots,s_k + q_k \right\} ,
$
with the relations induced by those in~$Q_k$.
Subsequently,
\[
 \Des(\tau) \, = \, \biguplus_{k=0}^M \left\{j + s_k \, : \, j \in \Des(\tau_k)\right\}
\]
and so the numerator in~\eqref{eq:stanleygenfct} becomes
\begin{align}
  &\prod_{ j \in \Des(\tau) } z_{ \tau(j+1) } z_{ \tau(j+2) } \cdots z_{ \tau(c) } \nonumber \\
  &\qquad = \, \prod_{k=0}^{M} \prod_{j \in \Des(\tau_k)} z_{\tau_k(j+1)+s_k} z_{\tau_k(j+2)+s_k} \cdots
z_{\tau_k(q_k)+s_k} z_{\tau_{k+1}(1)+s_{k+1}} \cdots z_{\tau_M(q_M)+s_M} \, , \label{eq:numerator}
\end{align}
with an analogous form for the denominator.
We now apply these concepts to the partition diamond poset $P$ in Figure~\ref{fig:dfoldfin}.

\begin{proof}[Proof of Theorem~\ref{thm:main}]
We have
\[
  P \, = \, \{ 1 \} \oplus \underbrace{ Q_d \oplus Q_d \oplus \dots \oplus Q_d }_{ M \text{ copies} } \, ,
\]
where $Q_d$ is the poset in
Figure~\ref{fig:dk}, an antichain with $d$ elements plus one more element $\succ$ the others. 
\begin{figure}[ht]
    \centering
    \begin{tikzpicture}
        \begin{scope}[every node/.style={circle,draw,scale=0.75,inner sep=2pt}]
            \node[label=right:$d+1$] (a1) at (0,0) {};

            \foreach \i in {1,2,3}
            {\node (b\i) at (-1,-0.5*\i+1.5) {};
            
            }
            \node[label=left: $d$] (bd) at (-1,-1) {};

            \node also [label=left:$1$] (b1);
            \node also [label=left:$2$] (b2);
            \node also [label=left:$3$] (b3);
        
        \end{scope}

        \begin{scope}[>={Stealth[black]}, decoration={
        markings,
        mark=at position 0.6 with {\arrow{Latex}}}
        ] 
            \node at (-1,-0.33) {$\vdots$};
            \draw [postaction=decorate] (b1) -- (a1);
            \draw [postaction=decorate] (b2) -- (a1);
            \draw [postaction=decorate] (b3) -- (a1);
            \draw [postaction=decorate] (bd) -- (a1);
        \end{scope}
    \end{tikzpicture}

    \caption{The poset $Q_d$.}
    \label{fig:dk}
\end{figure}
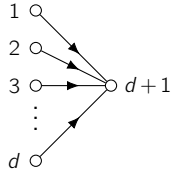

The generating function $\sigma_{ d, M}(a,b)$ defined in \eqref{eq:sigmadMdef} is the special evaluation
\[
  \sigma_{ d, M}(a,b) \, = \, \sigma_P(z_1, z_2, \dots, z_c)
  \qquad \text{ where } \qquad 
z_j = 
\begin{cases}
    a & \text{if } j \not\equiv 1 \bmod d+1,\\
    b & \text{if } j \equiv 1 \bmod d+1.
\end{cases}
\]
We now count how many $a$s and $b$s appear in~\eqref{eq:numerator}.
Each $\tau_k(d+1) = d+1$, contributing a factor of $b^{ M-k+1 }$ to~\eqref{eq:numerator}.
The remaining variables contribute $a^{ d-j }$ for the index $k$ and $a^{ (M-k)d }$ for the indices~$>k$.
The denominator in~\eqref{eq:stanleygenfct} is computed analogously; note that, unlike the numerator, it has a
contribution stemming from the minimal element in $P$.
In summary, this analysis yields
\begin{align}
  \sigma_{ d, M}(a,b) 
  \, &= \, \sum_{\tau \in \JH(P)}
\frac{\prod_{k=1}^M\prod_{j\in\Des(\tau_k)}a^{(M-k+1)d-j}b^{M-k+1}}{\left(1-a^{Md}b^{M+1}
\right)\prod_{k=1}^{M}\prod_{j=0}^{d} \left( 1-a^{(M-k+1)d-j}b^{M-k+1} \right)} \nonumber \\
     &= \, \frac{\prod_{k=1}^M \sum_{\tau \in S_d}\prod_{j\in\Des(\tau)} a^{(M-k+1)d-j}b^{M-k+1}
}{\left(1-a^{Md}b^{M+1} \right)\prod_{k=1}^{M}\prod_{j=0}^{d} \left( 1-a^{(M-k+1)d-j}b^{M-k+1} \right)} \, ,
\label{eq:tobesimplified}
\end{align}
where the second equation follows from the fact that each $\tau_k$ stemming from some $\tau \in \JH(P)$ fixes
$d+1$, but can freely permute the remaining $d$ elements.

By standard bijective arguments,
\begin{align*}
  \sum_{\tau \in S_d}\prod_{j\in\Des(\tau)} a^{(M-k+1)d-j}b^{M-k+1}
  \, &= \, \sum_{\tau \in S_d}\prod_{j\in\Asc(\tau)} a^{(M-k+1)d-j}b^{M-k+1} \\
  \, &= \, \sum_{\tau \in S_d}\prod_{d-j\in\Asc(\tau)} a^{(M-k)d+j}b^{M-k+1} \\
  \, &= \, \sum_{\tau \in S_d}\prod_{j\in\Des(\tau)} a^{(M-k)d+j}b^{M-k+1} \, ,
\end{align*}
where $\Asc(\tau) := \{ j : \, \tau(j) < \tau(j+1) \}$.
Substituting this back into~\eqref{eq:tobesimplified} and making the change of parameters $n:= M-k$ gives
\begin{align*}
  \sigma_{ d, M}(a,b) 
  \, &= \, \frac{\prod_{n=0}^{M-1} \sum_{\tau \in S_d} \prod_{j \in \Des(\tau)} a^{nd+j}b^{n+1} } {\left(1-a^{Md}b^{M+1} \right)\prod_{n=0}^{M-1}\prod_{j=0}^{d} \left( 1-a^{(n+1)d-j}b^{n+1} \right)} \\
     &= \, \frac{\prod_{n=1}^{M} \sum_{\tau \in S_d} \prod_{j \in \Des(\tau)} a^{(n-1)d+j}b^{n} } {\left(1-a^{Md}b^{M+1} \right)\prod_{n=1}^{M}\prod_{j=0}^{d} \left( 1-a^{nd-j}b^{n} \right)} \\
     &= \, \frac{\prod_{n=1}^{M} \sum_{\tau \in S_d} a^{ \maj(\tau) } \left( a^{(n-1)d}b^{n} \right)^{
\des(\tau) } } {\left(1-a^{Md}b^{M+1} \right)\prod_{n=1}^{M}\prod_{j=0}^{d} \left( 1-a^{nd-j}b^{n} \right)} \\
     &= \, \frac{ \prod_{ n=1 }^M E_d ( a^{ (n-1)d } b^n , a ) }{ \left( 1 - a^{ Md } b^{ M+1 } \right) \prod_{
n=1 }^M \prod_{ j=0 }^d \left( 1 - a^{ nd-j } b^n \right) } \, . \qedhere
\end{align*}
\end{proof}


\section{Another Extension}\label{sec:final}

The \emph{ansatz} for our proof of Theorem~\ref{thm:main} is, naturally, amenable to more general constructs. We
give one sample here, whose proof is analogous to that of Theorem~\ref{thm:main}.

Let $\{d_j\}_{j=1}^M$ be a finite sequence of positive integers. We define the \Def{multifold partition diamond}
corresponding to this sequence to be a partition whose parts follow a similar structure as those in Figure
\ref{fig:dfoldfin}, but the $j$th diamond has $d_j$ folds.    
The accompanying bivariate generating function is $\sigma_{ d_1, \dots, d_M } (a,b)$, where again $a$
encodes the parts in the ``folds'' of the diamond, and $b$ encodes the parts in the links connecting the~$\diamond$s.
Let $\omega_k := \sum_{j=k+1}^M d_j$.

\begin{theorem}
\[
      \sigma_{ d_1, \dots, d_M } (a,b)
      \, = \, \frac{\prod_{k=1}^M E_{d_k}(a^{\omega_k}b^{M-k+1},a)}{\left( 1- a^{\omega_0} b^{M+1} \right)
\prod_{k=1}^M \prod_{j=0}^{d_k} \left( 1 - a^{\omega_k+d_k - j} b^{M-k+1}\right)} \, .
\]
\end{theorem}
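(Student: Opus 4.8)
The plan is to mirror the proof of Theorem~\ref{thm:main} verbatim, replacing the constant fold-size~$d$ by the variable sequence~$\{d_j\}$ and carefully tracking how the exponents accumulate. First I would write the underlying poset as the linear sum
\[
  P \, = \, \{ 1 \} \oplus Q_{ d_1 } \oplus Q_{ d_2 } \oplus \dots \oplus Q_{ d_M } \, ,
\]
where each $Q_{ d_k }$ is the poset of Figure~\ref{fig:dk} with $d_k$ antichain elements plus one maximal element. The crucial structural fact is unchanged: each $\tau \in \JH(P)$ decomposes as a choice of one $\tau_k \in \JH(Q_{ d_k })$ per block, and since each $Q_{ d_k }$ forces its top element to be maximal while leaving the $d_k$ minimal elements freely permutable, the inner sum over $\JH(Q_{ d_k })$ becomes a free sum over $S_{ d_k }$. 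Thus the product-over-blocks factorization and the descent-splitting identity~\eqref{eq:numerator} carry over with no change in logic.

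The genuinely new bookkeeping is the exponent tracking, because the link and fold variables now appear at positions governed by the \emph{partial sums} of the $d_j$ rather than by multiples of a single~$d$. Here I would introduce $\omega_k := \sum_{ j=k+1 }^M d_j$ exactly as in the statement: $\omega_k$ counts the total number of fold-elements lying strictly above the $k$th block, so that it plays the role that $(M-k)d$ played in the uniform case. Then I would assign the specialization $z_j = b$ when $j$ indexes a link (a maximal element of some block or the global minimum) and $z_j = a$ when $j$ indexes a fold element, and recompute the two counts appearing in~\eqref{eq:numerator}. The top element of block~$k$ contributes a factor of $b^{ M-k+1 }$ since it sits above itself and the $M-k$ later link tops; a descent at position $j \in \Des(\tau_k)$ within block~$k$ contributes $a^{ \omega_k + d_k - j }$, the exponent $\omega_k + d_k - j$ recording the $d_k - j$ fold elements above position~$j$ inside block~$k$ together with the $\omega_k$ fold elements in all later blocks. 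The denominator factors come out as $\bigl( 1 - a^{ \omega_k + d_k - j } b^{ M-k+1 } \bigr)$ for $0 \le j \le d_k$, plus the single $\bigl( 1 - a^{ \omega_0 } b^{ M+1 } \bigr)$ contributed by the global minimal element, with $\omega_0 = \sum_{ j=1 }^M d_j$ being the total number of folds.

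Having assembled the analogue of~\eqref{eq:tobesimplified}, the final step is to recognize each block's inner sum as an Euler--Mahonian polynomial. I would apply the same descent-to-ascent bijective argument used after~\eqref{eq:tobesimplified}, now within $S_{ d_k }$: the reflection $j \mapsto d_k - j$ converts $\sum_{ \tau \in S_{ d_k } } \prod_{ j \in \Des(\tau) } a^{ \omega_k + d_k - j } b^{ M-k+1 }$ into $\sum_{ \tau \in S_{ d_k } } \prod_{ j \in \Des(\tau) } a^{ \omega_k + j } b^{ M-k+1 }$, which rewrites as $\sum_{ \tau \in S_{ d_k } } a^{ \maj(\tau) } \bigl( a^{ \omega_k } b^{ M-k+1 } \bigr)^{ \des(\tau) } = E_{ d_k } ( a^{ \omega_k } b^{ M-k+1 }, a )$. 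Assembling the numerator product over $k$ and pairing it with the denominator yields the claimed formula.

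The main obstacle is purely the exponent accounting, not the structure: I would need to verify that $\omega_k + d_k - j$ correctly counts, for each descent at position $j$ in block~$k$, the fold variables $z_{ \tau(j+1) } \cdots z_{ \tau(c) }$ equal to~$a$, and that the $b$-exponent $M-k+1$ counts exactly the link tops at or above the $k$th block. The uniform case offers a reliable sanity check: setting every $d_j = d$ gives $\omega_k = (M-k)d$, whence $\omega_k + d_k - j = (M-k+1)d - j$ and $\omega_0 = Md$, recovering~\eqref{eq:tobesimplified} precisely, so any indexing slip would surface immediately against Theorem~\ref{thm:main}.
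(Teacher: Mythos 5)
Your proposal is correct and takes exactly the approach the paper intends: the paper gives no separate proof of this theorem, stating only that it is ``analogous to that of Theorem~\ref{thm:main},'' and your argument is precisely that analogy carried out, with the right poset decomposition $\{1\} \oplus Q_{d_1} \oplus \dots \oplus Q_{d_M}$, correct exponent bookkeeping via $\omega_k$, and the same descent--ascent reflection to produce $E_{d_k}(a^{\omega_k}b^{M-k+1},a)$. The uniform-case sanity check confirming consistency with Theorem~\ref{thm:main} is a nice touch, and all your exponent counts match the stated formula.
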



\bibliographystyle{amsplain}
\bibliography{bib}

\setlength{\parskip}{0cm} 

\end{document}